\DeclareSymbolFont{bbold}{U}{bbold}{m}{n}
\DeclareSymbolFontAlphabet{\mathbbold}{bbold}
\DeclareSymbolFontAlphabet{\mathbb}{AMSb}%
\newtheorem{theorem}{Theorem}
\newtheorem{lemma}{Lemma}
\newtheorem*{assumption}{Assumption}
\newcommand{\R}[0]{\mathbb{R}}
\renewcommand{\P}[0]{\mathbb{P}}
\newcommand{\Sum}[2]{\displaystyle\sum\limits_{#1}^{#2}}
\newcommand{\Inter}[2]{\displaystyle\bigcap\limits_{#1}^{#2}}
\newcommand{\Reu}[2]{\displaystyle\bigcup\limits_{#1}^{#2}}
\newcommand{\dd}[0]{\mathrm{d}}
\newcommand{\blue}[1]{#1}%
\renewcommand{\Im}[0]{\mathrm{Im}}
\newcommand{\Ker}[0]{\mathrm{Ker}}
\renewcommand{\epsilon}{\varepsilon}
\DeclareMathOperator{\argmin}{argmin}
\newcommand{\W}[0]{\mathrm{W}}
\renewcommand{\SS}{\mathbb{S}}
\newcommand{\eqlabel}[1]{\tag{#1}\label{eqn:#1}}
\newcommand{\Bell}{{\boldsymbol\ell}}
\newcommand{\npoints}{n}
\newcommand{\nproj}{r}
\newcommand{\SW}{\mathrm{SW}}
\title{Reconstructing discrete measures from 
  projections. Consequences on the empirical Sliced Wasserstein Distance.} 
\author[1]{Eloi Tanguy}
\author[2]{R\'emi Flamary}
\author[1]{Julie Delon}
\affil[1]{Universit\'e Paris Cit\'e, CNRS, MAP5, F-75006 Paris, France}
\affil[2]{CMAP, CNRS, Ecole Polytechnique, Institut Polytechnique de Paris}
\date{April 2023}
\begin{document}

\maketitle

\begin{abstract} 
   This paper deals with the reconstruction of a discrete measure $\gamma_Z$ on $\mathbb{R}^d$ from the knowledge of
   its pushforward measures $P_i\#\gamma_Z$ by linear
   applications $P_i: \mathbb{R}^d \rightarrow \mathbb{R}^{d_i}$
   (for instance projections onto
   subspaces). The measure $\gamma_Z$ being fixed, assuming that the rows of the matrices $P_i$ are
   independent realizations of laws which do not give mass 
   to hyperplanes, we show that if $\sum_i d_i > d$, this 
   reconstruction problem has almost certainly a unique solution.
   This holds for any number of points in $\gamma_Z$. A direct
   consequence of this result is an almost-sure separability property on
   the empirical Sliced Wasserstein distance. 
\end{abstract}

\section{Introduction}

In this note, we are interested in the following question: for a given discrete probability measure $\gamma_Z $ on $\R^d$, and $\nproj$ linear transformations $P_i:  \R^d \rightarrow \R^{d_i}$, can we characterize the set of probability measures on $\R^d$ with exactly the same images as $\gamma_Z$ through all of the maps $P_i$? Formally, this set writes
\begin{equation}\eqlabel{RP}
	\mathcal{S} = \left\lbrace\gamma \in \mathcal{P}(\R^d)\ |\ \forall i \in \llbracket 1, \nproj \rrbracket,\; P_i\#\gamma = P_i\#\gamma_Z \right\rbrace,
\end{equation}
where $P_i\#\gamma$ denotes the push-forward of $\gamma$ by $P_i$, {\em i.e.} the measure on $\R^{d_i}$ such that for any Borelian $A \subset\R^{d_i} $, $(P_i\#\gamma)(A) =\gamma(P_i^{-1}(A))$, and $\mathcal{P}(\R^d)$ denotes the space of {probability} measures on $\R^d$. 
The set $\mathcal{S}$ is nonempty since it contains at least $\gamma_Z$. A natural underlying question is to know when we get uniqueness,   {\em i.e.}  when $\mathcal{S} = \{\gamma_Z\}$. Indeed, in this case $\gamma_Z$ can be exactly reconstructed from the knowledge of all the $ P_i \# \gamma_Z$, which is why we refer to this problem as a reconstruction problem.

This reconstruction problem appears in many applied fields where a multidimensional measure is known only through a finite set of images or projections. This is the case for instance in medical or geophysical imaging problems such as tomography~\cite{gardner1999uniqueness}. It is also strongly related to the separability properties of the empirical version of the  Sliced Wasserstein distance~\cite{Rabin_texture_mixing_sw, bonneel2015sliced}, which is frequently used in machine learning applications~\cite{karras2018progressive, deshpande_generative_sw, Wu2019_SWAE}. 

In our reconstruction problem, it is clear that if one of the $P_i$ is injective
(which implies $d \leq d_i$), then $\mathcal{S} = \{\gamma_Z\}$, which is why we focus here on the cases where none of the $P_i$ is injective. We will also assume in this note that the $P_i$ are surjective and that  all the $d_i$ are strictly smaller than $d$, since  we can always replace $\R^{d_i}$ by the smaller subspace $\mathrm{Im}(P_i)$.  
To the best of our knowledge, this problem has not been widely discussed in the literature, perhaps because of its apparent simplicity.   
 A close and more discussed question is the one of the existence of probability measures $\gamma$ with marginal constraints~\cite{kazi2019construction, dall2012advances, kellerer1964verteilungsfunktionen}.
 Existence results for such couplings are known for some families of measures~\cite{joe1993parametric}, or measures exhibiting some specific correlation structures~\cite{cuadras1992probability}. However, in the general case, even if marginal constraints are compatible with each other, the existence of solutions is not always ensured~\cite{gladkov2019multistochastic}. 
 
 Our study case is different, since the constraints are all obtained as push-forwards of an unknown $\gamma_Z$, and the central question is not existence but uniqueness of solutions.  It is well known that a measure is uniquely determined by its projections on \textit{all} lines of $\R^d$ (Cramer-Wold theorem~\cite{cramer1936some}), and more generally by its projections on a set of subspaces as soon as they cover the whole space together~\cite{renyi1952projections}.
\blue{The problem for a non-discrete target measure has been studied by Chafai \cite{chafai_projections} which considers a similar reconstruction problem from the viewpoint of characteristic functions, using the equivalence $\forall i \in \llbracket 1, \nproj \rrbracket,\; P_i\#\mu = P_i\#\nu = 0 \Longleftrightarrow \forall i \in \llbracket 1, \nproj \rrbracket,\; \forall x \in \R^d,\; \phi_\mu(P_ix) = \phi_\nu(P_ix)$, where $\phi_\mu$ and $\phi_\nu$ denote the respective characteristic functions of $\mu$ and $\nu$. In general, the knowledge of the characteristic function on a finite union of strict subspaces is insufficient to determine a measure \cite{chafai_projections}.}
For a finite number of directions and in the case of a discrete measure $\gamma_Z$, simple linear algebra shows that if the number $\nproj$ of projections is large enough, we get $\mathcal{S} = \{\gamma_Z\}$. When the $P_i$ are projections on different hyperplanes for instance, Heppes showed in 1956~\cite{heppes1956determination} that a discrete distribution of at most $\npoints$ points $\gamma_Z = \frac 1 \npoints \sum_{\ell=1}^\npoints \delta_{z_\ell} $ is uniquely characterized by its projections $P_i\#\gamma_Z$ if the number $\nproj$ of these projections is larger than $\npoints+1$, and that simple counter-examples could be exhibited with only $\nproj=\npoints$ hyperplanes.  
More recent works~\cite{reconstruct_projection} show that uniqueness can be ensured with less projections as soon as the set of points is known to belong to a specific quadratic manifold.  These results are deterministic, they hold for every set of points and hyperplanes with the appropriate cardinality. %
In this paper, we add some stochasticity to the problem, and  
assume that the lines of the matrices $P_i$~\footnote{With a slight abuse of notation, we use the same notation here for the linear maps and their associated matrices.} are i.i.d. following a law $\mathbb{P}$ which does not give mass to hyperplanes. Under this assumption, we show that if $\sum_{i=1}^\nproj d_i > d$,  then $\mathbb{P}$-almost surely $\mathcal{S} = \{\gamma_Z\}$. This result is very different from the ones already present in the literature: it holds only a.s., but this permits a considerably weaker condition on the $P_i$, and the condition for the reconstruction surprisingly does not depend on the number of points.

\section{Solutions of the Reconstruction Problem}

In this section, we characterize the set $\mathcal{S}$ of solutions
defined in~\ref{eqn:RP} depending on the set of linear maps $P_i$. We
write {$\gamma_Z = \sum_{\ell=1}^\npoints b_\ell \delta_{z_\ell} $}
with $Z = (z_1,\cdots,z_\npoints) \in (\R^d)^{\npoints}$, and assume
that all points are distinct ($k\neq j \Longrightarrow z_k \neq
z_j$). We also always assume that $\npoints >1$. {The weights $(b_\ell) \in (\R_{+}^*)^\npoints$ sum to one and are each nonzero.}

 As we shall see, given a discrete measure $\gamma_Z$ with $\npoints$
 points in dimension $d$, the Reconstruction Problem~\ref{eqn:RP}  has a unique solution $\mathcal{S}=\{\gamma_Z\}$ almost-surely when drawing the linear maps $P_i$ randomly, and when the dimensions strictly exceed $d$, {\em i.e.} when $D:=d_1 + \cdots + d_\nproj > d$.

\subsection{Computing Linear Push-Forwards of Discrete Measures}

Characterizing $\mathcal{S}$ requires the following technical Lemma, which provides a geometrical viewpoint of the push-forward operation.

\begin{lemma}[Linear push-forward formula]\label{lemma:linear_push_forward}
	Let $P \in \mathcal{M}_{h, d}(\R)$ of rank $h \leq d$ and $B \subset \R^h$.

	Then $P^{-1}(B) = P^T (PP^T)^{-1}B + \Ker P$. %

\end{lemma}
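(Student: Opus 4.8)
The plan is to recognize the matrix $P^+ := P^T(PP^T)^{-1}$ as a right inverse of $P$ and to exploit the canonical splitting of $\R^d$ it induces into the row space of $P$ and $\Ker P$. First I would record two elementary facts. Since $P$ has full row rank $h$, the Gram matrix $PP^T \in \mathcal{M}_{h,h}(\R)$ is invertible, so $P^+$ is well defined; and a one-line computation gives $PP^+ = PP^T(PP^T)^{-1} = I_h$, so $P^+$ is indeed a right inverse of $P$. I would also note that $\Im(P^+) = \Im(P^T) = (\Ker P)^\perp$, which is what makes the splitting below the orthogonal decomposition of $\R^d$ along $\Ker P$.

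The heart of the argument is a single identity: for any $x \in \R^d$, write $x = P^+(Px) + (x - P^+ Px)$. Applying $P$ to the second summand and using $PP^+ = I_h$ yields $P(x - P^+ Px) = Px - Px = 0$, so $x - P^+ Px \in \Ker P$. Thus every vector decomposes as an element of $\Im(P^+)$ plus an element of $\Ker P$.

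With this decomposition in hand I would establish the two inclusions. For $P^{-1}(B) \subseteq P^T(PP^T)^{-1}B + \Ker P$: given $x$ with $Px \in B$, the identity above exhibits $x$ as $P^+(Px) \in P^+B$ plus the kernel element $x - P^+ Px$. For the reverse inclusion: given $x = P^+ b + k$ with $b \in B$ and $k \in \Ker P$, I compute $Px = PP^+ b + Pk = b \in B$, hence $x \in P^{-1}(B)$. Combining the two inclusions gives the claimed equality.

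I do not expect a genuine obstacle here; the content is essentially the verification that $P^+$ acts as an inverse of $P$ on the row space. The only hypothesis that truly matters is the full-row-rank assumption $\rank P = h$, which is precisely what guarantees invertibility of $PP^T$ and hence the existence of $P^+$; without it the formula would not even be well posed.
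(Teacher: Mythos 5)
Your proof is correct and follows essentially the same route as the paper: both rest on the decomposition $x = P^T(PP^T)^{-1}Px + (x - P^T(PP^T)^{-1}Px)$, which the paper phrases via the orthogonal projection $Q = P^T(PP^T)^{-1}P$ onto $\Im P^T$ and the splitting $\R^d = \Im P^T \oplus \Ker P$, while you verify kernel membership of the second summand directly from the right-inverse identity $PP^+ = I_h$. Your variant is marginally more self-contained (orthogonality is never needed), but the key identity and both inclusions are the same.
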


\ref{fig:linear_push_forward} shows a visualization of the set $P^T (PP^T)^{-1}B
+ \Ker P$, first where $B$ is comprised of two points of $\R^2$ and $\Ker P$ is
a horizontal plane in 3D, and second with $B$ a measurable set of $\R^2$. This illustrates the ill-posedness of the problem when the dimension of the {projections and number of projections is too small. In this case with $\nproj=1,\; d=3$ and $d_1 = 2$}, the condition $P^{-1}(A) = P^{-1}(B)$ leaves a degree of freedom, which we can visualize as the vertical axis here.

\begin{figure}[H]
	\centering
	\begin{subfigure}[c]{0.38\textwidth}
		\centering
		\includegraphics[width=\linewidth]{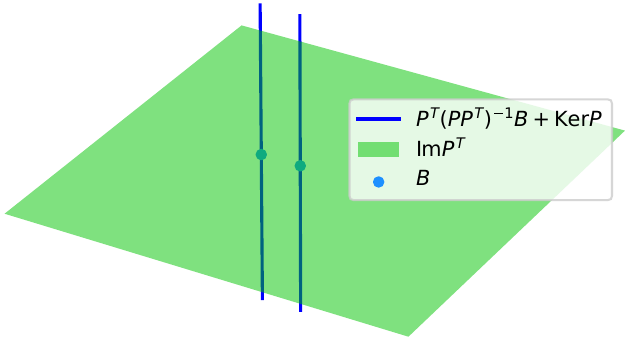}
	\end{subfigure}
	\hfill
	\begin{subfigure}[c]{0.6\textwidth}
		\centering
		\includegraphics[width=\linewidth]{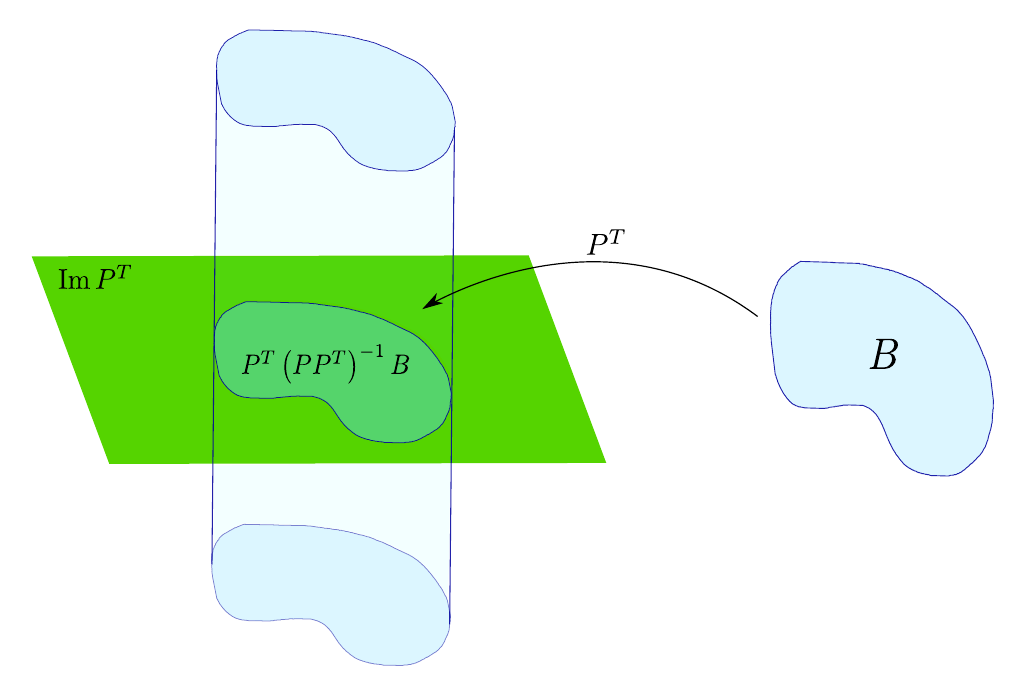}
	\end{subfigure}
	\caption{Illustrations of the linear push-forward formula $P^{-1}(B)$ for a
	3D to 2D projection $P$, (left) when $B$ is a set of two points and
	(right) for a more general set $B$.}
	\label{fig:linear_push_forward}
	\hfill
\end{figure}

\begin{proof}
	If $a \in P^T (PP^T)^{-1}B + \Ker P$, then by writing $a  = P^T (PP^T)^{-1}b + x$ with $b \in B$ and $x \in \Ker P$, we have $Pa = b \in B$, thus $a \in P^{-1}(B)$.

	For the opposite inclusion, consider $a \in P^{-1}(B)$. Since $P$ is of full rank $h$, we have the decomposition $\R^d = \Im P^T \overset{\perp}{\bigoplus} \Ker P$, with $Q := P^T (PP^T)^{-1}P$ the orthogonal projection on $\Im P^T$.

	Thus we can write $a = Qa + (I-Q)a = P^T (PP^T)^{-1}Pa + (I-Q)a $. Since $Pa \in B$, we conclude that $a \in P^T (PP^T)^{-1}B + \Ker P$.
\end{proof}

\subsection{Restraining the support of solutions of RP}

The following theorem states that the support of any solution of \ref{eqn:RP} is constrained to a  set $S$ obtained as the intersection of all sets $Z+\Ker P_i$. Without loss of generality, we will assume that each $P_i$ is of full rank $d_i$.

\begin{theorem}[Support of solutions of \ref{eqn:RP}]\

	If $\gamma$ is a solution of \ref{eqn:RP}, then 	 $\gamma\left(S \right) = 1$ with
		\begin{equation}\label{eqn:supp_RP2}
		S := \Inter{i=1}{\nproj} \left(Z + \Ker P_i\right) = \Reu{\substack{(\ell_1 ,\cdots, \ell_\nproj) \in \llbracket 1, \npoints \rrbracket^\nproj}}{}\quad\Inter{i=1}{\nproj}\;\left(z_{\ell_i} + \Ker P_i\right).
	\end{equation}

\end{theorem}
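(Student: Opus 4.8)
The plan is to exploit the fact that each pushforward $P_i \# \gamma_Z$ is a \emph{finitely supported} measure, and to pull the resulting support constraint back through $P_i$ using Lemma~\ref{lemma:linear_push_forward}.

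First I would record that, since $\gamma_Z = \sum_{l=1}^{\npoints} b_l \delta_{z_l}$, each image $P_i \# \gamma_Z$ is concentrated on the finite set $F_i := \{P_i z_1, \dots, P_i z_{\npoints}\} \subset \R^{d_i}$, so that $(P_i \# \gamma_Z)(F_i) = 1$. If $\gamma$ solves~\ref{eqn:RP}, then $P_i \# \gamma = P_i \# \gamma_Z$, hence $(P_i \# \gamma)(F_i) = 1$ as well. By the definition of the pushforward this reads $\gamma\big(P_i^{-1}(F_i)\big) = 1$: the measure $\gamma$ gives full mass to the preimage of $F_i$, for every $i$.

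The second step is to identify this preimage with $Z + \Ker P_i$. Applying Lemma~\ref{lemma:linear_push_forward} with $B = F_i$ gives $P_i^{-1}(F_i) = P_i^T (P_i P_i^T)^{-1} F_i + \Ker P_i = \bigcup_{l=1}^{\npoints} \big( P_i^T (P_i P_i^T)^{-1} P_i z_l + \Ker P_i \big)$. Since $Q_i := P_i^T (P_i P_i^T)^{-1} P_i$ is the orthogonal projection onto $\Im P_i^T$, one has $z_l - Q_i z_l = (I - Q_i) z_l \in \Ker P_i$, so each coset $Q_i z_l + \Ker P_i$ coincides with $z_l + \Ker P_i$. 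Thus $P_i^{-1}(F_i) = \bigcup_{l=1}^{\npoints}(z_l + \Ker P_i) = Z + \Ker P_i$, and we obtain $\gamma(Z + \Ker P_i) = 1$ for all $i$.

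Finally, I would combine these $p$ full-measure constraints: the complement of $S = \bigcap_{i=1}^{p}(Z + \Ker P_i)$ is the finite union of the complements of the $Z + \Ker P_i$, each of which is $\gamma$-negligible, so $\gamma(S) = 1$. The remaining equality in~\ref{eqn:supp_RP2} is purely set-theoretic: distributing the intersection over the unions $Z + \Ker P_i = \bigcup_{l} (z_l + \Ker P_i)$ yields the union over all multi-indices $(l_1,\dots,l_p)$ of the sets $\bigcap_i (z_{l_i} + \Ker P_i)$. I expect no serious obstacle here; the only point deserving care is the coset identification $Q_i z_l + \Ker P_i = z_l + \Ker P_i$ of the second step, which is precisely where the orthogonal-projection structure supplied by Lemma~\ref{lemma:linear_push_forward} is used.
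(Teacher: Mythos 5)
Your proof is correct and follows essentially the same route as the paper: both translate the constraint $P_i\#\gamma = P_i\#\gamma_Z$ into the support condition $\gamma(Z + \Ker P_i) = 1$ via Lemma~\ref{lemma:linear_push_forward} and conclude with a union bound over the complements. The only (cosmetic) difference is that you use the exact preimage identity $P_i^{-1}(F_i) = Z + \Ker P_i$ together with the coset observation $Q_i z_l + \Ker P_i = z_l + \Ker P_i$, whereas the paper reaches the same conclusion through the inequality $\gamma(A) \leq P_i\#\gamma(P_i A)$ applied to the complement $K_i^c$.
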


\begin{proof}
Using the same notations as in the proof of Lemma~\ref{lemma:linear_push_forward}, we write $Q_i := P_i^T (P_iP_i^T)^{-1}P_i$ the orthogonal projection on $\Im P_i^T$ and we recall the  decomposition $\R^d = \Im P_i^T \overset{\perp}{\bigoplus} \Ker P_i$.
Thus,  for  any borelian $A$ of $\R^d$, $A \subset Q_i A +  \Ker P_i$. Then $\gamma(A) \leq \gamma (Q_i A +  \Ker P_i) = P_i \# \gamma(P_iA)$, where the last equality is a direct consequence of Lemma~\ref{lemma:linear_push_forward}. 

Now, assume that $\gamma\in \mathcal{S}$ and define $S :=\Inter{i=1}{\nproj} K_i $ with  $K_i = Z+ \Ker P_i$. For each $i$, applying the previous inequality to $K_i^c$ yields $\gamma(K_i^c) \leq  P_i \# \gamma(P_iK_i^c)$. Since $\gamma$ is a solution, we have $P_i\#\gamma = P_i\#\gamma_Z$. By construction, $K_i = \{x,\;P_ix \in P_iZ\}$ thus  $K_i^c = \{x,\;P_ix \notin P_iZ\}$. Since $P_i\#\gamma$ is supported by $ P_iZ$, it follows that $P_i \# \gamma(P_iK_i^c) = 0$.
Finally, 
$\gamma(S^c) = \gamma(\Reu{i=1}{\nproj} K_i^c) \leq \sum_{i=1}^\nproj \gamma(K_i^c) =0$ and thus $\gamma(S)=1$.

\end{proof}

\ref{fig:reconstruction_grid} illustrates the previous result, with $\nproj=2$
projections onto lines in $\R^2$, with $\npoints=3$ points $Z = (z_1, z_2, z_3)$.
The support of any solution is confined to the intersections between any two
lines of the form $z_\ell + \Ker P_i$. Here this corresponds to the intersecting
points between an orange and a red line, allowing for 9 possible points,
including the original 3. In this case any weighting of the 9 Dirac masses that
respect the marginal constraints will give a solution: there exists an infinity
of possible solutions.

\begin{figure}[H]
	\centering
	\begin{subfigure}[c]{0.3\textwidth}
		\centering
		\includegraphics[width=0.7\linewidth]{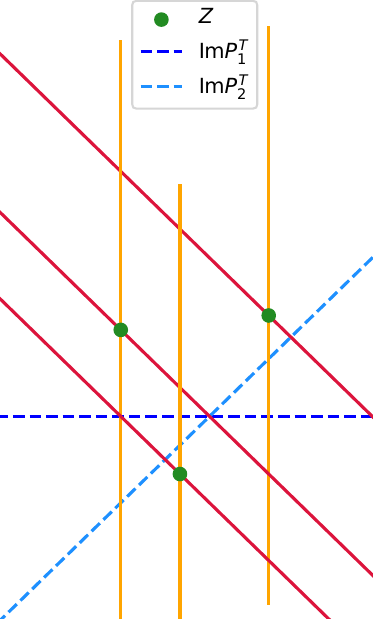}
	\end{subfigure}
	\hspace{50pt}
	\begin{subfigure}[c]{0.3\textwidth}
		\centering
		\includegraphics[width=0.7\linewidth]{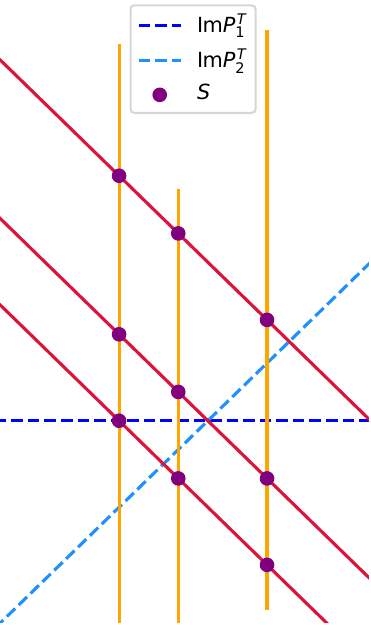}
	\end{subfigure}
	\caption{Illustration of the possible points for the support of a solution. On the left, $Z$ is the original measure points, and on the right, $S$ is the set of possible points for the support of a solution.}
	\label{fig:reconstruction_grid}
\end{figure}

\subsection{Conditions for unicity of solutions of RP}

Leveraging the previous support restriction and elementary random affine geometry, we can further restrict the condition on the set of solutions  $\mathcal{S}$. Theorem \ref{thm:as_unicity_reconstruction} below shows that if the random linear maps $P_i$ cover the original space $\R^d$ with redundancy (i.e. the sum of their target space dimensions strictly exceeds $d$), then almost surely, the reconstruction problem has a unique solution $\gamma_Z$. We formalize this random setting by the following assumption.

\begin{assumption}[$\mathcal{A}_\P$]
$$ \quad \forall i \in \llbracket 1, \nproj \rrbracket, \; P_i = \left(\begin{array}{ccc}
	\textrm{---} & \left(u_i^{(1)}\right)^T & \textrm{---} \\
	\textrm{---} & \vdots & \textrm{---} \\
	\textrm{---} & \left(u_i^{(d_i)}\right)^T & \textrm{---}
\end{array}\right)\quad  \text{ where } u_i^{(j)} \sim \P\ \mathrm{i.i.d,}$$
where $\P$ is a probability distribution over $\R^d$ s.t. for any hyperplane $H \subset \R^d,\; \P(H) = 0$.
\end{assumption}

The condition on the probabilities is verified in particular if $\P$ is absolutely continuous w.r.t. the Lebesgue measure of $\R^d$, or w.r.t. $\bbsigma$, the uniform measure over $\SS^d$ (the unit sphere of $\R^d$). These two examples are the most common for practical reconstruction problems, which is why we formulate $\mathcal{A}_\P$ in this manner.

The next theorems use assumption $\mathcal{A}_\P$ but still hold true
under milder hypotheses, where the lines $\left(u_i^{(j)}\right)^T$ of the matrices $P_i$
are assumed independent with (possibly different) probability laws giving no mass to hyperplanes.  

\begin{theorem}[Almost-sure unicity in
  \ref{eqn:RP}]\label{thm:as_unicity_reconstruction}
  Let $\gamma_Z$ be a fixed discrete probability measure. Assume that
  the matrices $P_i$ follow assumption $\mathcal{A}_\P$, and that  $D:=\Sum{i=1}{\nproj}d_i > d$. Then 
	$\P$-almost surely, $S = Z$ and  $\mathcal{S} = \{\gamma_Z\}$.
\end{theorem}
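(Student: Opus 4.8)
The plan is to reduce everything to proving that $S = Z$ almost surely, since once this is known the conclusion $\mathcal{S} = \{\gamma_Z\}$ follows quickly. Indeed, by the support theorem any solution $\gamma \in \mathcal{S}$ satisfies $\gamma(S) = 1$, so if $S = Z$ then $\gamma = \Sum{l=1}{\npoints} a_l \delta_{z_l}$ for some weights $a_l \ge 0$ summing to $1$. To pin down the weights I would use that, for each fixed $i$ and each pair $k \ne j$, the event $P_i z_k = P_i z_j$ is contained in $\{\langle u_i^{(1)}, z_k - z_j\rangle = 0\}$, a $\P$-null hyperplane event since $z_k \ne z_j$; a union bound over the finitely many pairs shows that almost surely $P_1$ is injective on $Z$. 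Then $P_1\#\gamma = \Sum{l=1}{\npoints} a_l \delta_{P_1 z_l}$ has distinct atoms and must equal $P_1\#\gamma_Z = \Sum{l=1}{\npoints} b_l \delta_{P_1 z_l}$, which forces $a_l = b_l$ for all $l$, i.e. $\gamma = \gamma_Z$.

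It remains to establish $S = Z$ almost surely. The inclusion $Z \subseteq S$ holds deterministically, by taking the constant tuples $(l,\dots,l)$ in the second expression for $S$ in \ref{eqn:supp_RP2}. For the reverse inclusion I would use the decomposition of $S$ as the finite union over tuples $(l_1,\dots,l_p) \in \llbracket 1, \npoints\rrbracket^p$ of the affine sets $\Inter{i=1}{p}(z_{l_i} + \Ker P_i)$. Since $D \ge d$, the stacked matrix of all $D$ rows $u_i^{(j)}$ has full column rank $d$ almost surely (each successive row avoids the span of the previous ones, a $\P$-null set), so $\Inter{i=1}{p}\Ker P_i = \{0\}$ and each constant tuple contributes exactly the single point $z_l$. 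Thus it suffices to show that, almost surely, every \emph{non-constant} tuple contributes the empty set; as there are at most $\npoints^p$ tuples, a union bound reduces this to a single fixed non-constant tuple.

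Fix such a tuple and observe that $\Inter{i=1}{p}(z_{l_i}+\Ker P_i)$ is the solution set in $x$ of the $D$ scalar equations $\langle u_i^{(j)}, x\rangle = \langle u_i^{(j)}, z_{l_i}\rangle$, each carrying a \emph{label} $l_i$. I would reveal these rows one at a time, maintaining the current affine solution set, and order them so that the two leading rows come from blocks carrying \emph{different} labels (possible since the tuple is non-constant and $d \ge 2$). As long as the current solution set has positive dimension, an independent fresh row cuts its dimension down by exactly one almost surely, since the rows that fail to cut lie in a proper subspace, hence in a $\P$-null set; after the first $d$ rows the solution set is therefore almost surely a single point $x^*$ depending only on those first $d$ rows. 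Because $D > d$, at least one further row remains, and its constraint reads $\langle u, x^* - z_{l}\rangle = 0$ for its label $l$, which by independence fails almost surely \emph{unless} $x^* = z_l$. The main obstacle is precisely excluding this coincidence, which looks circular because $x^*$ is built from the very rows one would like to exploit. I would resolve it by noting that $\{x^* = z_l\}$ forces, in particular, the single equation $\langle v, z_l - z_{l'}\rangle = 0$, where $v$ is whichever of the two leading rows has label $l' \ne l$ — such a row exists because the two leading labels differ and $l$ coincides with at most one of them. Since $z_l \ne z_{l'}$ is a fixed nonzero vector, this is one $\P$-null hyperplane condition on $v$ alone, so $\P(x^* = z_l) = 0$ with no circularity. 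Combining, the fixed non-constant tuple contributes the empty set almost surely, which yields $S = Z$ and completes the proof.
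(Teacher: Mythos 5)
Your proof is correct, and its skeleton matches the paper's: decompose $S$ over the $\npoints^p$ tuples $\mathbf{l}$, use the first $d$ rows to obtain an a.s.\ unique candidate point $x^*$, use a $(d+1)$-th row to kill it, and recover the weights through the a.s.\ injectivity of $P_1$ on $Z$ (your weight argument is exactly the paper's Step~2). The one genuine divergence is how you handle the coincidence case. You set out to prove the strictly stronger statement that every \emph{non-constant} tuple contributes the empty set, which forces you to exclude the event $x^* = z_l$, and you do so with a nice de-circularization device: ordering the rows so that the two leading ones carry different labels, so that $x^* = z_l$ already implies the fixed null condition $\langle v, z_l - z_{l'}\rangle = 0$ on a single row $v$, with no dependence issue. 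The paper sidesteps this entirely: since the theorem only needs $S_\mathbf{l} \subseteq Z$, it observes that when the $(d+1)$-th equation $v_{d+1}^T(x^* - w_{d+1}) = 0$ holds because $x^* = w_{d+1}$, then $x^*$ is one of the $z_{l_i}$ and hence already in $Z$, so only the case $x^* \neq w_{d+1}$ requires the conditional null-hyperplane argument. Your route thus buys a stronger conclusion --- a.s.\ emptiness of the non-constant $S_\mathbf{l}$, in the spirit of the disjointness argument the paper deploys later for the critical case $D = d$ in Theorem~\ref{thm:support_critical_case} --- at the cost of the extra ordering trick, while the paper's version is shorter because containment in $Z$ is all that is needed.
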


The idea behind the proof of Theorem~\ref{thm:as_unicity_reconstruction} is that $S$ is the union of sets of the form $\Inter{i=1}{\nproj}\;\left(z_{\ell_i} + \Ker P_i\right)$, which can be rewritten as  intersections of more than $d$ affine subspaces  in dimension $d$, thus are  $\P$-almost surely either singletons or empty. %

\begin{proof}
    \textrm{---} \textit{Step 1}: $S = Z$
	
	Let $\Bell := (\ell_1, \cdots, \ell_\nproj) \in \llbracket 1, \npoints \rrbracket^\nproj$ and $S_\Bell := \Inter{i=1}{\nproj}\;\left(z_{\ell_i} + \Ker P_i\right)$. We want to show $S_\Bell \subset Z$.
	
	First, observe that $x \in S_\Bell \Longleftrightarrow  \forall i \in  \llbracket 1, \nproj\rrbracket,\; \forall j \in  \llbracket 1, d_i\rrbracket,\;\;\; (u_i^{(j)})^T x = (u_i^{(j)})^T z_{\ell_i}$.

	We write $D = \sum_{i=1}^\nproj d_i$. %
	For the sake of simplicity, we rewrite the $k^{th}$ vector $u_i^{(j)}$ as $v_k$, where $k  \in  \llbracket 1, D\rrbracket$, and in the same way we write  $(w_k)_{k=1\cdots D}$ the vectors $(z_{\ell_1},\cdots, z_{\ell_1}, z_{\ell_2},\cdots, z_{\ell_2},\cdots, z_{\ell_\nproj},\cdots, z_{\ell_\nproj})$ with each $z_{\ell_i}$ repeated $d_i$ times. With these notations, 	we get
        \begin{equation}
          x \in S_\Bell \Longleftrightarrow v_k^T x = v_k ^T w_k,\;\; \forall k  \in  \llbracket 1, D\rrbracket.\label{linear_system}
        \end{equation}
        Let us call (LS) the linear system on the right of~\ref{linear_system}.
    (LS) has $D$ equations and $d$ unknowns, with $D > d$, it is therefore overdetermined.
    When all $w_k$ are equal, {\em i.e.} when $\Bell := (\ell, \cdots, \ell)$, clearly $x = z_\ell$ is a solution, which shows that $z_\ell \in S$ and thus $Z \subset S$.          

    If $\mathcal{A}_\P$ is satisfied, the matrix $U^{(d)} =
    \left(v_1,\cdots, v_d\right)^T $ is almost surely of full rank and
    the linear system   $v_k^T x = v_k ^T w_k$ for $k\in  \llbracket
    1, d\rrbracket$ almost surely has a unique solution $x^*$. The
    $(d+1)^{\mathrm{th}}$ equality of (LS) is $v_{d+1}^T (x^*-
    w_{d+1})=0$, which happens iif  $x^*= w_{d+1}$, or $x^*\neq
    w_{d+1}$ and $v_{d+1} \in  (x^*- w_{d+1})^{\perp}$. In the first
    case, the solution $x^*$ belongs to $Z$ since $w_{d+1}$ is one of
    the $z_{\ell_i}$. If $x^*\neq w_{d+1}$, since all the $\{v_k\}$ are
    i.i.d. of law $\P$, conditionally to $U^{(d)}$ the probability
    that $v_{d+1}$ is orthogonal to $(x^*- w_{d+1})$  is null and (LS)
    has almost surely no solution. We conclude that $S = Z$ almost surely.
    
    \textrm{---} \textit{Step 2}: The set of solutions of \ref{eqn:RP} is $\lbrace \gamma_Z \rbrace$ a.s.
    
    We have proven that $S = Z$ a.s., and thus that any solution $\gamma \in \mathcal{S}$ is supported by $Z$ a.s.. Let us write $\gamma = \Sum{\ell=1}{\npoints}a_\ell\delta_{z_\ell}$ and $\gamma_Z = \Sum{\ell=1}{\npoints}b_\ell\delta_{z_\ell}$. It follows in particular that $\Sum{\ell=1}{\npoints} (a_\ell - b_\ell) \delta_{P_1 z_\ell} = 0$, {and since for $k \neq \ell,\; z_k\neq z_\ell$,} hence $\mathbb{P}(P_1 z_\ell = P_1 z_k) \leq \mathbb{P}(u_1^{(1)} \in (z_\ell - z_k)^{\perp})=0$, thus $\forall \ell$, $a_\ell = b_\ell$ a.s..

\end{proof}

The previous Theorem~\ref{thm:as_unicity_reconstruction} only holds almost-surely, however "improbable" counter examples do exist with excessive symmetry. Below we present a counter-example adapted from \cite{reconstruct_projection}. Let $d := 2,\; \nproj := \npoints > d$ and $\forall i \in \llbracket 1, \nproj \rrbracket,\; d_i := 1$.

Consider $z_\ell := \left(\cos\left(\frac{(2\ell + 1)\pi}{\npoints}\right), \sin\left(\frac{(2\ell + 1)\pi}{\npoints}\right)\right)^T,\;P_\ell := \left(\cos\left(\frac{(2\ell + 1)\pi}{2\npoints}\right), \sin\left(\frac{(2\ell + 1)\pi}{2\npoints}\right)\right).$

As can be seen below (\ref{fig:CE_reconstruction}), for $\npoints=3$, this corresponds to placing the $(z_\ell)$ on every other vertex of a regular $2\npoints$-gon, and defining the $P_\ell$ such that $\Im P_\ell^T$ is the $\ell$-th bisector of the $2\npoints$-gon.

\begin{figure}[H]
	\centering
	\includegraphics[width=.6\linewidth]{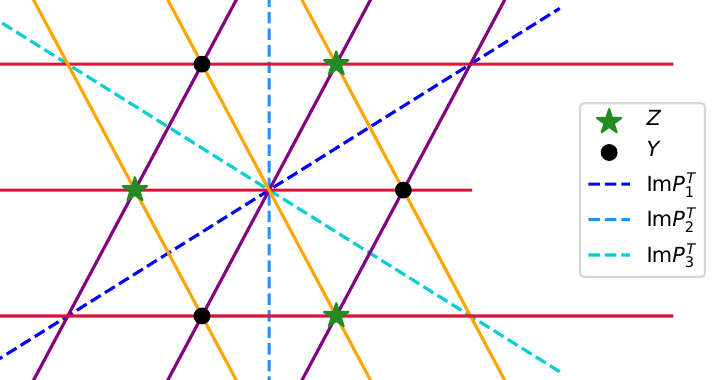}
	\caption{Illustration of a pathological super-critical case without unicity
	for specific projections $P_i$. In this case, $Y$ and $Z$ are distinct
	solutions with the same projections.}
	\label{fig:CE_reconstruction}
\end{figure}

The points of $S$ are the points of the form $\Inter{i=1}{3} \left(z_{\ell_i} + \Ker P_i\right)$, or visually the intersection points of a yellow line, a red line and a purple line. We can see that the remaining vertices of the polygon constitute another valid measure $\gamma_Y$ whose push-forwards $P_i\#\gamma_Y$ are all the same as those of the original measure.

As mentioned in~\cite{reconstruct_projection}, for a fixed list of hyperplanes, there always
exists two sets of points with the same projections on all of these
hyperplanes. Theorem I.2 from
\cite{reconstruct_projection} indicates that a necessary condition to ensure
uniqueness in this case is $\nproj > \npoints$.
In our Theorem
\ref{thm:as_unicity_reconstruction}, the points are fixed and
uniqueness of the reconstruction holds almost surely when the $P_i$
follow assumption $\mathcal{A}_\P$ and as soon as $D > d$, whatever
the number $\npoints$ of points in the discrete measure.

\subsection{Details on the critical case \texorpdfstring{$\sum_i d_i =d$}{sum di eq d}}\label{sec:details_critical_case}

In the theorem below, we show that the example in
\ref{fig:reconstruction_grid} is representative of the critical case.

\begin{theorem}[Number of admissible points in the critical case]\label{thm:support_critical_case}
	\

	Let $\gamma_Z$ be a fixed discrete probability measure. Assume that
  the matrices $P_i$ follow assumption $\mathcal{A}_\P$, and that  $D:=\Sum{i=1}{\nproj}d_i = d$. 
Then the cardinality of $S$ is exactly $\npoints^\nproj$, $\P$-a.s..
\end{theorem}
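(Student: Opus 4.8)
The plan is to reuse the reformulation already set up in the proof of Theorem~\ref{thm:as_unicity_reconstruction}. For a multi-index $\mathbf{l} = (l_1, \ldots, l_p) \in \llbracket 1, \npoints \rrbracket^p$, write $S_\mathbf{l} := \Inter{i=1}{p}\left(z_{l_i} + \Ker P_i\right)$, so that $S = \Reu{\mathbf{l}}{} S_\mathbf{l}$ is a union of $\npoints^p$ such sets. Stacking all rows of the $P_i$ as $(v_1, \ldots, v_D)$ and repeating each $z_{l_i}$ exactly $d_i$ times as $(w_1, \ldots, w_D)$, exactly as in~\ref{linear_system}, membership $x \in S_\mathbf{l}$ is equivalent to the system $v_k^T x = v_k^T w_k$ for $k \in \llbracket 1, D \rrbracket$. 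The key structural feature of the critical regime $D = d$ is that this is now a \emph{square} system of $d$ equations in $d$ unknowns, rather than the overdetermined one exploited in the super-critical case.

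First I would establish that the $d \times d$ matrix $U := U^{(d)} = (v_1, \ldots, v_d)^T$ is $\P$-almost surely invertible, by the same hyperplane argument underlying $\mathcal{A}_\P$: conditionally on $v_1, \ldots, v_{k-1}$ (with $k \le d$), their span lies in a hyperplane to which $\P$ assigns no mass, so $v_k$ almost surely avoids it; an induction on $k$ yields that $v_1, \ldots, v_d$ are almost surely linearly independent. Consequently, for \emph{every} $\mathbf{l}$ the square system admits a unique solution $x_\mathbf{l} = U^{-1} c_\mathbf{l}$, where $c_\mathbf{l} := (v_1^T w_1, \ldots, v_d^T w_d)^T$ depends on $\mathbf{l}$ only through the labels $w_k$. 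In particular each $S_\mathbf{l}$ is almost surely a singleton and never empty (in contrast with the super-critical case), the diagonal indices $(l,\ldots,l)$ giving back $x_{(l,\ldots,l)} = z_l$; hence $S$ consists of the $\npoints^p$ points $(x_\mathbf{l})_\mathbf{l}$ and $|S| \le \npoints^p$ almost surely.

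The remaining point, which I expect to be the crux, is the matching lower bound: that these $\npoints^p$ candidate points are almost surely pairwise distinct. Fix $\mathbf{l} \neq \mathbf{l}'$. Since $U$ is invertible, $x_\mathbf{l} = x_{\mathbf{l}'}$ is equivalent to $c_\mathbf{l} = c_{\mathbf{l}'}$, i.e. to $v_k^T\left(w_k - w_k'\right) = 0$ for every $k$. As $\mathbf{l} \neq \mathbf{l}'$, there is a block $i_0$ with $l_{i_0} \neq l_{i_0}'$; choosing any row index $k_0$ in that block gives $w_{k_0} - w_{k_0}' = z_{l_{i_0}} - z_{l_{i_0}'} \neq 0$, the points of $Z$ being distinct. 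Thus $x_\mathbf{l} = x_{\mathbf{l}'}$ forces the single event $v_{k_0}^T\left(z_{l_{i_0}} - z_{l_{i_0}'}\right) = 0$, which places $v_{k_0}$ in a fixed hyperplane and therefore has $\P$-probability $0$; consequently $\P\left(x_\mathbf{l} = x_{\mathbf{l}'}\right) = 0$.

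Finally I would take a union bound over the finitely many (at most $\npoints^{2p}$) pairs of distinct multi-indices, together with the null event on which $U$ is singular, to conclude that $\P$-almost surely all the $x_\mathbf{l}$ are distinct, whence $|S| = \npoints^p$. I expect the only genuine subtlety to be bookkeeping rather than depth: one must observe that the solutions $x_\mathbf{l}$ for different $\mathbf{l}$ are built from the \emph{same} matrix $U$ and the \emph{same} rows $v_k$, so that a collision reduces to the orthogonality of one fixed $v_{k_0}$ to one fixed nonzero vector $z_{l_{i_0}} - z_{l_{i_0}'}$. Once this reduction is made explicit, the no-mass-to-hyperplanes hypothesis closes the argument through a single finite union bound.
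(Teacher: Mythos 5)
Your proposal is correct and takes essentially the same approach as the paper: decompose $S$ into the $\npoints^p$ sets $S_{\mathbf{l}}$, use $\mathcal{A}_\P$ to get that the stacked $d\times d$ row matrix is a.s. invertible so each $S_{\mathbf{l}}$ is a.s. a singleton, and reduce any collision for $\mathbf{l}\neq\mathbf{l}'$ to one fixed row being orthogonal to the fixed nonzero vector $z_{l_{i_0}}-z_{l'_{i_0}}$, a $\P$-null event, closed by a finite union bound. The only cosmetic difference is that the paper derives disjointness directly from a point $x\in S_{\mathbf{l}}\cap S_{\mathbf{l}'}$ satisfying both linear systems, whereas you pass through $c_{\mathbf{l}}=c_{\mathbf{l}'}$ via $U^{-1}$; the two reductions are equivalent.
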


\begin{proof}
  
\blue{Using the notation $\Bell := (\ell_1, \cdots, \ell_\nproj) \in \llbracket 1, \npoints \rrbracket^\nproj$,} we know that $S = \Reu{\substack{\Bell \in \llbracket 1, \npoints \rrbracket^\nproj}}{}\quad S_{\Bell}$ where $ S_{\Bell} = \Inter{i=1}{\nproj}\;\left(z_{\ell_i} + \Ker P_i\right)$. 
Following the proof of Theorem~\ref{thm:as_unicity_reconstruction}, in the case $D = d$, we see that assumption $\mathcal{A}_\P$ implies that $ S_{\Bell} $ is almost surely a singleton $\{ x_{\Bell} \}$. It follows that $S$ is almost surely the union of at most $\npoints^\nproj$ singletons.
Let us show that if $\Bell \neq \Bell'$ then  $ S_{\Bell} \cap   S_{\Bell'} = \varnothing$ a.s..
Indeed, if $x$ belongs to  $ S_{\Bell} \cap   S_{\Bell'}$ then $x$ is solution of a linear system of $2d$ equations:

$$\forall i \in  \llbracket 1, \nproj\rrbracket,\; \forall j \in \llbracket 1, d_i\rrbracket,\;\;\;\left\lbrace\begin{array}{c}
    (u_i^{(j)})^T x = (u_i^{(j)})^T z_{\ell_i}  \\
    (u_i^{(j)})^T x = (u_i^{(j)})^T z_{\ell'_i} 
\end{array}\right. ,$$
which implies $\forall i \in  \llbracket 1, \nproj\rrbracket,\; \forall j \in  \llbracket 1, d_i\rrbracket,\;\;\; \ell_i = \ell'_i, \;\text{ or }\;  \ell_i \neq \ell_i'\;\text{and}\;u_{\ell_i}^{(j)} \in (z_{\ell_i }- z_{\ell'_i})^{\perp}$.

Now, under $\mathcal{A}_\P$, if $\ell_i \neq \ell_i'$, then $\mathbb{P}( u_{\ell_i}^{(j)} \in (z_{\ell_i }- z_{\ell'_i})^{\perp}) = 0$, and thus $\Bell = \Bell'$ a.s..

\end{proof}

Let us clarify what the set of solutions $\mathcal{S}$ looks like in  this critical case $D = d$. Let $\gamma$ be a solution of \ref{eqn:RP} and denote $S = (x_\Bell)_{\Bell \in \llbracket 1, \npoints \rrbracket^\nproj}$. By Theorem \ref{thm:support_critical_case}, $\gamma$ is of the form $\gamma = \Sum{\Bell \in \llbracket 1, \npoints \rrbracket^\nproj}{} a_\Bell \delta_{x_\Bell}$. %
Now, since $\gamma$ is a solution, we have for $i \in \llbracket 1, \nproj \rrbracket,\; P_i\#\gamma = P_i\#\gamma_Z$, thus$\Sum{\Bell \in \llbracket 1, \npoints \rrbracket^\nproj}{}a_\Bell \delta_{P_iz_{\ell_i}} = \Sum{k=1}{\npoints}b_k\delta_{P_iz_k}$. 
Since the $(P_iz_\ell)_\ell$ are all distinct a.s., this entails for all $k \in \llbracket 1, \npoints \rrbracket$: $\Sum{\Bell_{-i} \in \llbracket 1, \npoints \rrbracket^{\nproj-1}}{}a_{\ell_1, \cdots, \ell_{i-1}, k, \ell_{i+1}, \cdots, \ell_\nproj} = b_k$, where $\Bell_{-i}$ indicates that we index this $(\nproj-1)$-tuple on $\llbracket 1, \npoints \rrbracket \setminus \lbrace i \rbrace$.
We can re-write this condition as $a \in \Pi_\npoints^\nproj(b)$, the set of $\npoints$-dimensional $\nproj$-tensors on $\R_+$ ($\R_+^{\npoints^\nproj}$) with all $\nproj$ marginals equal to $b$.
Conversely, if $\gamma$ is of the form $\gamma = \Sum{\Bell \in \llbracket 1, \npoints \rrbracket^\nproj}{} a_\Bell \delta_{x_\Bell}$ with $a \in \Pi_\npoints^\nproj(b)$, then we have by construction $\forall i \in \llbracket 1, \nproj \rrbracket,\; P_i\#\gamma = P_i\#\gamma_Z$ and thus $\gamma$ is a solution.

{In the particular case where $\gamma_Z$ is uniform, if we restrain $\gamma$ to be also a uniform measure, the problem in this critical case has a combinatorial amount of solutions. Without this
restriction, the problem has an infinite amount of
solutions, as is discussed in the particular case of \ref{fig:reconstruction_grid}.}

\section{Consequence for the empirical Sliced Wasserstein Distance}

The Sliced Wasserstein  (SW) distance  between probability measures is frequently used
in applied fields such as image processing or machine learning, as an efficient alternative to the Wasserstein distance. It was introduced
in~\cite{Rabin_texture_mixing_sw} to generate barycenters between images of
textures, and it is commonly used nowadays as a loss
~\cite{karras2018progressive,deshpande_generative_sw,Wu2019_SWAE} to train
generative networks. This distance writes: $$\forall \alpha, \beta \in \mathcal{P}_2(\R^d),\quad\SW^2(\alpha,\beta) = \int_{\theta \in \SS^d}{} \W_2^2(P_{\theta} \# \alpha,P_{\theta} \# \beta) \dd\bbsigma(\theta),$$
where $\bbsigma$ is the uniform distribution over the unit sphere $\SS^d$ of $\R^d$, and $P_{\theta}$ denotes the linear projection on the line of direction $\theta$. In its empirical (Monte-Carlo) approximation, used for numerical applications, it becomes:

\begin{equation}\label{eq:SW_MC}
\forall \alpha, \beta \in \mathcal{P}_2(\R^d),\quad\widehat{\mathrm{SW}}_\nproj^2(\alpha, \beta) := \cfrac{1}{\nproj}\Sum{i=1}{\nproj}\mathrm{W_2^2}(P_{\theta_i} \#\alpha, P_{\theta_i}\#\beta).
\end{equation}

\blue{The main advantage of SW over the usual Wasserstein distance is
  computational: for two $d$-dimensional uniform discrete measures
  with $n$ samples, the empirical estimation with $\nproj$ projections
  \ref{eq:SW_MC} can be computed in $\mathcal{O}(\nproj dn + \nproj n \log(n))$ (\cite{nadjahi2021sliced}, \S 2.6), leveraging the fact that the 1D projected Wasserstein distances can be computed by a sorting algorithm. In the same setting, the Wasserstein distance can only be computed in super-quadratic complexity with respect to $n$ \cite{computational_ot}.}

Since $\W_2$ is a distance on $\mathcal{P}_2(\R^d)$ (the space of probability measures over $\R^d$ admitting a finite second-order moment), $\widehat{\mathrm{SW}}_\nproj$ is non-negative, homogeneous and satisfies the triangle inequality. However,  $\widehat{\mathrm{SW}}_\nproj$ is only  a pseudo-distance since it does not satisfy the separation property: whatever the $\nproj$ directions chosen, it is always possible to find two different distributions $\alpha$ and $\beta$ such that  $\widehat{\mathrm{SW}}_\nproj(\alpha, \beta) =0$. Now, our previous reconstruction results show that when the $\nproj$
directions are drawn from $\bbsigma$ and $\beta$ is a fixed discrete measure, then $\forall \alpha \in \mathcal{P}_2(\R^d),\; \widehat{\mathrm{SW}}_p^2(\alpha,\beta)~=~0~\Longrightarrow~\alpha~=~\beta$ almost surely provided that $\nproj > d$. Indeed,
$ \widehat{\mathrm{SW}}_\nproj(\alpha, \beta) =0$ if and only if $\alpha$
belongs to the set $\mathcal{S}$ (for $\gamma_Z = \beta$).
On the contrary, when the number of projections is too small, the set of discrete measures at distance $0$ from a given one is infinite, as stated in the theorem below.

\begin{theorem}[]\label{thm:SW_insufficient_projections}
	
	Let {$\gamma_Z := \Sum{\ell=1}{\npoints}b_\ell\delta_{z_\ell}$}, where the $(z_\ell)$ are fixed and distinct. Assume $\theta_1, \cdots, \theta_\nproj \sim \bbsigma^{\otimes \nproj}$.
	
	\begin{itemize} 
	\item if $\nproj \leq d$, there exists $\bbsigma$-a.s. an infinity of measures $\gamma \neq \gamma_Z \in \mathcal{P}_2(\R^d)$ s.t. $\widehat{\mathrm{SW}}_\nproj(\gamma, \gamma_Z) = 0$.

	\item if $\nproj > d$, we have $\bbsigma$-almost surely $\lbrace\gamma_Z\rbrace = \underset{\gamma \in \mathcal{P}_2(\R^d)}{\argmin}\; \widehat{\mathrm{SW}}_\nproj(\gamma, \gamma_Z)$.
	
	\end{itemize}
\end{theorem}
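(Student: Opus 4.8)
The plan is to reduce both statements to the reconstruction results already proved. Since $\W_2$ is a genuine distance on $\mathcal{P}_2(\R)$ and $P_{\theta_i}\#\gamma \in \mathcal{P}_2(\R)$ whenever $\gamma \in \mathcal{P}_2(\R^d)$, the quantity $\SWMC^2(\gamma,\gamma_Z) = \frac1p\Sum{i=1}{p}\W_2^2(P_{\theta_i}\#\gamma, P_{\theta_i}\#\gamma_Z)$ is a sum of nonnegative terms, hence vanishes iff $P_{\theta_i}\#\gamma = P_{\theta_i}\#\gamma_Z$ for every $i$. Thus $\SWMC(\gamma,\gamma_Z)=0$ is exactly the condition $\gamma \in \mathcal{S}$ for the reconstruction problem \ref{eqn:RP} attached to the rank-one maps $P_i = P_{\theta_i}$, for which $d_i=1$ and $D=p$; the directions $\theta_i \sim \bbsigma$ are i.i.d. and $\bbsigma$ charges no hyperplane, so $\mathcal{A}_\P$ holds. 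For the second item ($p>d$), I would first note $\SWMC(\cdot,\gamma_Z)\ge 0$ with $\SWMC(\gamma_Z,\gamma_Z)=0$, so the minimum is $0$ and the argmin equals $\{\gamma \in \mathcal{P}_2(\R^d) : \SWMC(\gamma,\gamma_Z)=0\} = \mathcal{S}\cap\mathcal{P}_2(\R^d)$. Since $D=p>d$, Theorem \ref{thm:as_unicity_reconstruction} gives $\mathcal{S}=\{\gamma_Z\}$ $\bbsigma$-a.s., and $\gamma_Z$ being discrete lies in $\mathcal{P}_2(\R^d)$; hence the argmin is $\{\gamma_Z\}$ almost surely. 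This item is therefore a direct corollary.

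For the first item ($p\le d$) I would exhibit infinitely many distinct solutions, distinguishing two regimes. When $p<d$: the $p\times d$ matrix $P$ with rows $\theta_1^T,\ldots,\theta_p^T$ has i.i.d. rows charging no hyperplane, so a.s. it has full rank $p$ and $\Ker P$ has dimension $d-p\ge 1$. Fixing $v\neq 0$ in $\Ker P$, the translates $\gamma^{(t)} := \Sum{l=1}{\npoints} b_l\,\delta_{z_l+tv}$ satisfy $\langle z_l+tv,\theta_i\rangle = \langle z_l,\theta_i\rangle$ for all $i$, so $P_{\theta_i}\#\gamma^{(t)} = P_{\theta_i}\#\gamma_Z$ and $\SWMC(\gamma^{(t)},\gamma_Z)=0$. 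A finite point set cannot be invariant under a nonzero translation, so $\gamma^{(t)}=\gamma^{(t')}$ forces $t=t'$, yielding a continuum of distinct solutions in $\mathcal{P}_2(\R^d)$.

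When $p=d$ (critical case), $\Ker P=\{0\}$ a.s. and the translation trick degenerates, so I would instead invoke Theorem \ref{thm:support_critical_case}, which gives $|S|=\npoints^p$ with a.s. distinct grid points $x_{\mathbf{l}}$, together with the characterization following it: the solution set is $\{\Sum{\mathbf{l}}{} a_{\mathbf{l}}\delta_{x_{\mathbf{l}}} : a\in\Pi_\npoints^p(b)\}$. It then suffices to produce a nontrivial segment inside this marginal polytope: picking distinct indices $r,s$ and perturbing the diagonal tensor $a^0$ by $\epsilon E$, where $E := \delta_{(r,s,\ldots,s)} + \delta_{(s,r,\ldots,r)} - \delta_{(r,\ldots,r)} - \delta_{(s,\ldots,s)}$ has all $p$ marginals zero, yields valid nonnegative tensors for $0\le\epsilon\le\min(b_r,b_s)$ (using $b_r,b_s>0$), and distinct $\epsilon$ give distinct measures because the $x_{\mathbf{l}}$ are a.s. distinct. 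This argument requires $\npoints\ge 2$ and $p\ge 2$; the only genuinely degenerate configuration is $d=p=1$, where $P_{\theta_1}$ is (up to sign) the identity and uniqueness holds, so a meaningful statement of the first item tacitly assumes at least two atoms.

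The main obstacle is precisely the critical case $p=d$: unlike $p<d$ there is no free translation direction, so one must descend to the combinatorial grid structure of $S$ and argue that $\Pi_\npoints^p(b)$ has positive dimension, which needs both the explicit marginal-preserving perturbation above and the almost-sure fact that the $\npoints^p$ candidate points are distinct, ensuring distinct weight tensors induce distinct measures. Everything else is bookkeeping of almost-sure events, namely full rank of $P$, distinctness of the $x_{\mathbf{l}}$, and $P_{\theta_i}z_l\neq P_{\theta_i}z_k$ for $l\neq k$, each holding $\bbsigma$-a.s., so the whole construction is carried out on their finite intersection.
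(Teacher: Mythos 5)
Your proof is correct and follows essentially the same route the paper takes (the paper's formal proof is omitted, but the surrounding text spells out the intended argument): identify the zero set of $\SWMC(\cdot,\gamma_Z)$ with the solution set $\mathcal{S}$ of the reconstruction problem \ref{eqn:RP} for the rank-one maps $P_{\theta_i}$, invoke Theorem \ref{thm:as_unicity_reconstruction} for $p>d$, translate along $\bigcap_i \Ker P_{\theta_i}$ for $p<d$, and use the critical-case description of solutions via $\Pi_\npoints^p(b)$ from Theorem \ref{thm:support_critical_case} and the discussion in \ref{sec:details_critical_case} for $p=d$. Your explicit marginal-zero perturbation tensor, which makes the paper's claim of ``an infinite amount of solutions'' in the critical case rigorous, and your observation that the first item tacitly requires $\npoints\ge 2$ (uniqueness actually holds for a single Dirac when $p=d$) are welcome refinements of details the paper leaves implicit.
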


In the limit case $\nproj = d$, the distance can be grown by scaling the
points of $\gamma_Z$ further away from the origin. In the case $\nproj < d$, the supports of solution measures can be infinitely far from the support of $\gamma_Z$, as illustrated in \ref{fig:linear_push_forward}. \blue{To conclude, if $\nproj\leq d$, the information $\widehat{\mathrm{SW}}_\nproj(\gamma, \gamma_Z) \approx 0$ yields no valuable information regarding the closeness of $\gamma$ and $\gamma_Z$. If $\nproj>d$, the information $\widehat{\mathrm{SW}}_\nproj(\gamma, \gamma_Z) = 0$ yields $\gamma=\gamma_Z$ almost-surely with random projections. Due to possible local optima of $Y\longmapsto \widehat{\mathrm{SW}}_\nproj(\gamma_Y, \gamma_Z)$, knowing only $\widehat{\mathrm{SW}}_\nproj(\gamma, \gamma_Z) \approx 0$ may be too weak to conclude that the measures are close even in the favorable case $\nproj > d$, although these considerations are beyond the scope of this work.}

\section{Conclusion: Discussion on SW as a Loss in Machine Learning}

In Sliced-Wasserstein-based Machine Learning, the question of global optima is
paramount since in practice, one must default to a surrogate of $\SW$: be it
through stochastic gradient descent (drawing a batch of $\theta_i$ at each
iteration), or directly through the estimation $\widehat{\mathrm{SW}}_\nproj$. %
To be precise, generative models such as
\cite{deshpande_generative_sw} minimize $\theta \longmapsto
\mathrm{SW}(T_\theta\#\mu_0, \mu)$ - or a surrogate thereof - where $\mu_0$ is a
low-dimensional input distribution (often chosen as realizations of Gaussian
noise), where $\mu$ is the target distribution (the discrete dataset), and where
$T_\theta$ is the model of parameters $\theta$. In this case, the dimension $d$
of the data, which for images can easily exceed one million, can be
very large. \blue{Our results suggest that performing optimisation with less
than $d$ projections is unsound in this case, since it
leads to solutions that can be arbitrarily far away from the true data
distribution. } \blue{However, these results do not take into account
the  intrinsic dimension of the target measure $\gamma_Z$, which in
the case of images is probably supported on a $d'$-dimensional
manifold with $d'\ll d$. This question has been addressed very recently
in~\cite{tachella2023sensing}, which was unknown to us at the time of
working on this paper, and provides insightful reconstruction results
when $\gamma_Z$ is known to belong to a $d'$-dimensional manifold.}

Furthermore, it is important to underline that having the
guarantee that the global optima are the desired original measure is
insufficient in practice. Indeed, the landscape $Y \longmapsto
\widehat{\mathrm{SW}}_\nproj(\mu_Y, \mu_Z)$ can present numerous local optima, which
can limit the usefulness of SW as a loss function.
For practical considerations, this study on global optima could be complemented by an analysis of the aforementioned landscape, which we leave for future work. \blue{Namely, one may find a study of the optimization properties of $Y \longmapsto
\widehat{\mathrm{SW}}_\nproj(\mu_Y, \mu_Z)$ and $Y \longmapsto
\mathrm{SW}(\mu_Y, \mu_Z)$ in \cite{discrete_sliced_loss}, and a related extension to the study of Stochastic Gradient Descent for SW generative networks in \cite{nn_loss}.}

\newpage
\bibliography{ecl}

\begin{thebibliography}{10}

\bibitem{bonneel2015sliced}
Nicolas Bonneel, Julien Rabin, Gabriel Peyr{\'e}, and Hanspeter Pfister.
\newblock Sliced and radon wasserstein barycenters of measures.
\newblock {\em Journal of Mathematical Imaging and Vision}, 51(1):22--45, 2015.

\bibitem{chafai_projections}
Dajlil Chafaï.
\newblock Random projections, marginals, and moments.
\newblock 11 2021.

\bibitem{cramer1936some}
Harald Cram{\'e}r and Herman Wold.
\newblock Some theorems on distribution functions.
\newblock {\em Journal of the London Mathematical Society}, 1(4):290--294,
  1936.

\bibitem{cuadras1992probability}
CM~Cuadras.
\newblock Probability distributions with given multivariate marginals and given
  dependence structure.
\newblock {\em Journal of multivariate analysis}, 42(1):51--66, 1992.

\bibitem{dall2012advances}
Giorgio Dall'Aglio, Samuel Kotz, and Gabriella Salinetti.
\newblock {\em Advances in probability distributions with given marginals:
  beyond the copulas}, volume~67.
\newblock Springer Science \& Business Media, 2012.

\bibitem{deshpande_generative_sw}
Ishan Deshpande, Ziyu Zhang, and Alexander~G. Schwing.
\newblock Generative modeling using the sliced wasserstein distance.
\newblock In {\em 2018 {IEEE} Conference on Computer Vision and Pattern
  Recognition, {CVPR} 2018, Salt Lake City, UT, USA, June 18-22, 2018}, pages
  3483--3491. Computer Vision Foundation / {IEEE} Computer Society, 2018.

\bibitem{reconstruct_projection}
Aingeru Fernandez-Bertolin, Philippe Jaming, and Karlheinz Grochenig.
\newblock Determining point distributions from their projections.
\newblock pages 164--168, 07 2017.

\bibitem{gardner1999uniqueness}
Richard~J Gardner and Peter Gritzmann.
\newblock Uniqueness and complexity in discrete tomography, 1999.

\bibitem{gladkov2019multistochastic}
Nikita~A Gladkov, Alexander~V Kolesnikov, and Alexander~P Zimin.
\newblock On multistochastic monge--kantorovich problem, bitwise operations,
  and fractals.
\newblock {\em Calculus of Variations and Partial Differential Equations},
  58:1--33, 2019.

\bibitem{heppes1956determination}
A~Heppes.
\newblock On the determination of probability distributions of more dimensions
  by their projections.
\newblock {\em Acta Mathematica Hungarica}, 7(3-4):403--410, 1956.

\bibitem{joe1993parametric}
Harry Joe.
\newblock Parametric families of multivariate distributions with given margins.
\newblock {\em Journal of multivariate analysis}, 46(2):262--282, 1993.

\bibitem{karras2018progressive}
Tero Karras, Timo Aila, Samuli Laine, and Jaakko Lehtinen.
\newblock Progressive growing of {GAN}s for improved quality, stability, and
  variation.
\newblock 2018.

\bibitem{kazi2019construction}
Nabil Kazi-Tani and Didier Rulli{\`e}re.
\newblock On a construction of multivariate distributions given some
  multidimensional marginals.
\newblock {\em Advances in Applied Probability}, 51(2):487--513, 2019.

\bibitem{kellerer1964verteilungsfunktionen}
Hans~G Kellerer.
\newblock Verteilungsfunktionen mit gegebenen marginalverteilungen.
\newblock {\em Zeitschrift f{\"u}r Wahrscheinlichkeitstheorie und verwandte
  Gebiete}, 3:247--270, 1964.

\bibitem{nadjahi2021sliced}
Kimia Nadjahi.
\newblock {\em Sliced-Wasserstein distance for large-scale machine learning:
  theory, methodology and extensions}.
\newblock PhD thesis, Institut polytechnique de Paris, 2021.

\bibitem{computational_ot}
G.~Peyr{\'e} and M.~Cuturi.
\newblock Computational optimal transport.
\newblock {\em Foundations and Trends in Machine Learning}, 51(1):1--44, 2019.

\bibitem{Rabin_texture_mixing_sw}
Julien Rabin, Gabriel Peyr{\'e}, Julie Delon, and Marc Bernot.
\newblock Wasserstein barycenter and its application to texture mixing, 2012.

\bibitem{renyi1952projections}
Alfr{\'e}d R{\'e}nyi.
\newblock On projections of probability distributions.
\newblock {\em Acta Math. Acad. Sci. Hungar}, 3(3):131--142, 1952.

\bibitem{tachella2023sensing}
Juli{\'a}n Tachella, Dongdong Chen, and Mike Davies.
\newblock Sensing theorems for unsupervised learning in linear inverse
  problems.
\newblock {\em Journal of Machine Learning Research}, 24(39):1--45, 2023.

\bibitem{nn_loss}
Eloi Tanguy.
\newblock Convergence of sgd for training neural networks with sliced
  {W}asserstein losses.
\newblock {\em arXiv preprint arXiv:2307.11714}, 2023.

\bibitem{discrete_sliced_loss}
Eloi Tanguy, Rémi Flamary, and Julie Delon.
\newblock Properties of discrete sliced {W}asserstein losses.
\newblock {\em arXiv preprint arXiv:2307.10352}, 2023.

\bibitem{Wu2019_SWAE}
J.~Wu, Z.~Huang, D.~Acharya, W.~Li, J.~Thoma, D.~Paudel, and L.~Van Gool.
\newblock Sliced wasserstein generative models.
\newblock In {\em 2019 IEEE/CVF Conference on Computer Vision and Pattern
  Recognition (CVPR)}, pages 3708--3717, Los Alamitos, CA, USA, jun 2019. IEEE
  Computer Society.

\end{thebibliography}
\bibliographystyle{plain}
\end{document}